\documentclass[10pt,letterpaper]{article}

\usepackage[accepted]{tmlr}
\def\code{%
	\textit{https://github.com/pnkraemer/code-numerically-robust-fixedpoint-smoother}
}
\def\acks{%
\subsection*{Acknowledgements}
This work was supported by a research grant (42062) from VILLUM FONDEN. The work was partly funded by the Novo Nordisk Foundation through the Center for Basic Machine Learning Research in Life Science (NNF20OC0062606). This project received funding from the European Research Council (ERC) under the European Union’s Horizon programme (grant agreement 101125993).
}

\usepackage{amsthm}

\usepackage[colorlinks, citecolor=blue!50!black, urlcolor=black, linkcolor=black]{hyperref}
\usepackage{url}

\usepackage{tikz}
\usetikzlibrary{positioning, fit, bayesnet}
\usetikzlibrary{arrows.meta, arrows, backgrounds}
\tikzset{>=stealth'}

\usepackage{colortbl}
\newcommand{\winner}{\cellcolor{gray!20!white}\mathbf}

\usepackage{blindtext}

\usepackage{mwe}

\usepackage{microtype}

\usepackage{amsmath}
\newcommand{\oderiv}[1]{\frac{d^{#1}}{dt^{#1}}}

\usepackage{amssymb}
\newcommand{\Rbb}{\mathbb{R}}

\usepackage{cleveref}

\usepackage{mathtools}

\newtheorem{algo}{Algorithm}
\newtheorem{proposition}{Proposition}
\newtheorem*{contribution}{Contribution}
\crefname{algo}{algorithm}{algorithms}

\creflabelformat{equation}{#2#1#3}

\usepackage{algorithm}
\usepackage{algpseudocode}

\newcommand{\diff}{\,\text{d}}

\newcommand{\Chol}[1]{L_{#1}}

\usepackage{title_and_author}

\bibliographystyle{unsrtnat}

\usepackage{booktabs}

\usepackage{amssymb}%
\usepackage{pifont}%
\newcommand{\cmark}{ { \color{green!65!black}\ding{51} } }%
\newcommand{\xmark}{ { \color{red} \ding{55} } }%

\renewcommand{\paragraph}[1]{ { \sffamily {\textbf{#1} } } }

\frenchspacing

\usepackage{wrapfig}

\usepackage{xfrac}

\begin{document}
\maketitle

\begin{abstract}
Practical implementations of Gaussian smoothing algorithms have received a great deal of attention in the last 60 years.
However, almost all work focuses on estimating complete time series (``fixed-\emph{interval} smoothing'', $\mathcal{O}(K)$ memory) through variations of the Rauch--Tung--Striebel smoother, rarely on estimating the initial states (``fixed-\emph{point} smoothing'', $\mathcal{O}(1)$ memory). 
Since fixed-point smoothing is a crucial component of algorithms for dynamical systems with unknown initial conditions, we close this gap by introducing a new formulation of a Gaussian fixed-point smoother.
In contrast to prior approaches, our perspective admits a numerically robust Cholesky-based form (without downdates) and avoids state augmentation, which would needlessly inflate the state-space model and reduce the numerical practicality of any fixed-point smoother code.
The experiments demonstrate how a JAX implementation of our algorithm matches the runtime of the fastest methods and the robustness of the most robust techniques while existing implementations must always sacrifice one for the other.

Code: \code{}
\end{abstract}

\section{Introduction}
Linear Gaussian state-space models and Bayesian filtering and smoothing enjoy numerous applications in areas like tracking, navigation, or control \citep{grewal2014kalman,sarkka2023bayesian}.
They also serve as the computational backbone of contemporary machine learning methods that revolve around time-series, streaming data, or sequence modelling; for example, message passing, Gaussian processes, and probabilistic numerics \citep{grewal2014kalman,sarkka2019applied,murphy2023probabilistic,hennig2022probabilistic}.
Filtering and smoothing have also been used for constructing and training neural networks \citep{singhal1988training,gu2023mamba,chang2023lowrank} or continual learning \citep{sliwa2024efficient}.
All of these applications require fast and robust algorithms for state-space models; we propose a new one in this paper.

Recent developments in Bayesian smoothing focus on subjects like linearisation \citep[e.g.][]{garcia2016iterated}, temporal parallelisation \citep[e.g.][]{sarkka2020temporal}, or numerically robust implementations \citep[e.g.][]{yaghoobi2022parallel}.
However, they all exclusively focus on fixed-\emph{interval} smoothing, which targets the full time-series $p(x_{0:K} \mid y_{1:K})$ in $\mathcal{O}(K)$ memory, never on fixed-\emph{point} smoothing, which only yields the initial value $p(x_0 \mid y_{1:K})$ but does so in $\mathcal{O}(1)$ memory.
Even though fixed-point smoothing is a crucial component of, for example, estimating past locations of a spacecraft \citep{meditch1969stochastic}, it has yet to receive much attention in the literature on state estimation in dynamical systems. 
The experiments in \Cref{section-experiments} demonstrate fixed-point smoothing applications in probabilistic numerics and in a tracking problem.
We anticipate that probabilistic numerical solvers for differential equations will especially benefit from improving the practicality of fixed-point smoothers.
The reason is that these algorithms closely connect to filtering and smoothing algorithms \citep{schober2019probabilistic}, especially the numerically robust kind \citep{kramer2024stable}, and that unknown initial conditions are typical for parameter estimation problems in differential equations and scientific machine learning \citep{rackauckas2020universal}.
\Cref{section-experiment-robustness-bvp} revisits probabilistic numerics as a prime application for fixed-point smoothing.

\paragraph{Contributions}
We introduce an implementation of numerically robust fixed-point smoothing.
Our approach avoids the typical construction of fixed-point smoothers via state-augmentation \citep{biswas1972approach,smith1982robustness}, which increases the dimension of the state-space model and thus makes estimation needlessly expensive. And unlike previous work on fixed-point smoothing \citep{meditch1967optimal,sarkka2010gaussian,rauch1963solutions,meditch1967orthogonal,meditch1976initial,meditch1973estimation,nishimura1969new}, our perspective is not tied to any particular parametrisation of Gaussian variables.
Instead, our proposed algorithm enjoys numerical robustness and compatibility with data streams, while maintaining minimal complexity.
There exist tools for each of those desiderata, but our algorithm is the first to deliver them all at once.
We demonstrate the algorithm's efficiency and robustness on a sequence of test problems, including a probabilistic numerical method for boundary value problems \citep{kramer2021linear}, and show how to use fixed-point smoothing for parameter estimation in Gaussian state-space models.

\paragraph{Notation}
Enumerated sets are abbreviated with subscripts, for example $x_{0:K} \coloneqq \{x_0, ..., x_K\}$ and ${ y_{1:K} \coloneqq \{y_1, ..., y_K\} }$.
For sequential conditioning of Gaussian variables (like in the Kalman filter equations), we indicate the most recently visited data points with subscripts in the parameter vectors, for example, $p(x_k \mid y_{1:k-1}) = \mathcal{N}(m_{k \mid k-1}, C_{k \mid k-1})$ or $\mathcal{N}(m_{K \mid K}, C_{K \mid K}) = p(x_K \mid y_{1:K})$.
$I_n$ is the identity matrix with $n$ rows and columns.
All covariance matrices shall be symmetric and positive semidefinite.
Like existing work on numerically robust state-space model algorithms \citep[e.g.][]{grewal2014kalman,kramer2024implementing},
we define the \emph{generalised Cholesky factor} $\Chol{\Sigma}$ of a covariance matrix $\Sigma$ as any matrix that satisfies $\Sigma = \Chol{\Sigma} (\Chol{\Sigma})^\top$.
This definition includes the ``true'' Cholesky factor if $\Sigma$ is positive definite but applies to semidefinite $\Sigma$; for instance, the zero-matrix is the generalised Cholesky factor of the zero-matrix (which does not admit a Cholesky decomposition).
There are numerous ways of parametrising multivariate Gaussian distributions, but we exclusively focus on two kinds. Like \citet{yaghoobi2022parallel}, we distinguish:
\begin{itemize}
	\item \emph{Covariance-based parametrisations:} Parametrise multivariate Gaussian distributions with means and covariance matrices. Covariance-based parametrisations are the standard approach.
	\item \emph{Cholesky-based parametrisations:} Parametrise multivariate Gaussian distributions with means and generalised Cholesky factors of covariance matrices instead of covariance matrices. 
	Manipulating Gaussian variables in Cholesky-based parametrisations replaces the addition and subtraction of covariance matrices with QR decompositions, which improves the numerical robustness at the cost of a slightly increased runtime; it leads to methods like the square-root Kalman filter \citep{bennett1965triangular}.
\end{itemize}
Other forms, such as the information or canonical form of a multivariate Gaussian distribution \citep{murphy2022probabilistic}, are not directly relevant to this work.

\section{Problem statement: Fixed-point smoothing}
\label{section-background-on-filtering-and-smoothing}

\subsection{Background on filtering and smoothing}
\label{section-linear-gaussian-ssms}

\paragraph{Linear Gaussian state-space models}
This work only discusses linear, discrete state-space models with additive Gaussian noise because such models are the starting point for Bayesian filtering and smoothing. 
Nonlinear extensions are future work.
For integers $d$ and $D$, let $A_1, ..., A_K \in \Rbb^{D \times D}$ and $H_1, ..., H_K \in \Rbb^{d \times D}$ be linear operators and $C_{0 \mid 0}, B_1, ..., B_K \in \Rbb^{D \times D}$ and $R_1, ..., R_K \in \Rbb^{d \times d}$ be covariance matrices.
Introduce a vector $m_{0 \mid 0} \in \Rbb^D$.
Assume that observations $y_{1:K}$ are available according to the state-space model (and potentially as a data stream)
\begin{align}
\label{equation-state-space-model}
x_0 = \theta,
\quad
x_{k} = A_k x_{k-1} + b_k,
\quad
y_k = H_{k} x_{k} + r_{k},
\quad
k=1, ..., K,
\end{align}
with pairwise independent Gaussian random variables
\begin{align}
\label{equation-state-space-model-noise}
\theta  \sim \mathcal{N}(m_{0 \mid 0}, C_{0 \mid 0}),
\quad
b_k \sim \mathcal{N}(0, B_k),
\quad
r_{k} \sim \mathcal{N}(0, R_{k}),
\quad
k=1, ..., K.
\end{align}
In order to simplify the notation, \Cref{equation-state-space-model,equation-state-space-model-noise} assume that the Gaussian variables $b_{1:K}$ and $r_{1:K}$ have a zero mean and that there is no observation of the initial state $x_0 = \theta$.
The experiments in \Cref{section-experiments} demonstrate successful fixed-point smoothing even when violating those two assumptions.
Estimating $x_{1:K}$ from observations $y_{1:K}$ is a standard setup for filtering and smoothing \citep{sarkka2023bayesian}.

\paragraph{Kalman filtering}
Different estimators target different conditional distributions
(\Cref{table-state-estimation-approaches}).
\begin{table}[t]
\caption{
\emph{Estimation in Gaussian state-space models.}
Other estimation tasks exist, for example, {fixed-lag smoothing}. However, this article focuses on fixed-point smoothing and its relationship with filtering and fixed-interval smoothing.
``RTS smoother'': ``Rauch--Tung--Striebel smoother''.
}
\label{table-state-estimation-approaches}
\begin{center}
\begin{tabular}{ l l }
\toprule
Task & Target 
\\
\midrule
Filtering (e.g. Kalman filter) & Real-time updates $\{p(x_{k} \mid y_{1:k})\}_{k=1}^K$
\\
Fixed-interval smoothing (e.g. RTS smoother) & Full time series $p(x_{0:K} \mid y_{1:K})$
\\
Fixed-point smoothing & Initial state $p(x_{0} \mid y_{1:K})$
\\
\bottomrule
\end{tabular}
\end{center}
\end{table}
For example, the \emph{Kalman filter} \citep{kalman1960new} computes $\{p(x_k \mid y_{1:k})\}_{k=1}^K$ by initialising $p(x_0 \mid y_{1:0}) = p(x_0)$ and alternating
\begin{subequations}
\label{equation-filtering-pass}
\begin{align}
\text{Prediction:}&
&
p(x_{k-1} \mid y_{1:k-1}) 
&\longmapsto p(x_{k} \mid y_{1:k-1})
&
k&=1,...,K
\label{equation-filtering-pass-prediction}
\\
\text{Update:}&
&
p(x_{k} \mid y_{1:k-1}) 
&\longmapsto p(x_{k} \mid y_{1:k})
&
k&=1,...,K.
\end{align}
\end{subequations}
Since all operations are linear and all distributions Gaussian, the recursions are available in closed form.
Detailed iterations are in \Cref{appendix-section-kalman-filter}.
An essential extension of the Kalman filter is the \emph{square-root Kalman filter} \citep{bennett1965triangular,andrews1968square} (see also \citet{grewal2014kalman}), which yields the identical distributions as the Kalman filter but manipulates Cholesky factors of covariance matrices instead of covariance matrices.
The advantage of the Cholesky-based implementation of the Kalman filter over the covariance-based version is that covariance matrices are guaranteed to remain symmetric and positive semidefinite, whereas accumulated round-off errors can make the covariance-based Kalman filter break down.
In this work, we introduce Cholesky-based implementations for fixed-point smoothing, among other things.
Both the Kalman filter and its Cholesky-based extension cost $\mathcal{O}(K D^3)$ runtime and $\mathcal{O}(D^2)$ memory, assuming $D \geq d$ (otherwise, exchange $D$ for $d$).
The Cholesky-based filter is slightly more expensive because it uses QR decompositions instead of matrix multiplication; \Cref{appendix-section-kalman-filter} contrasts Cholesky-based and covariance-based  Kalman filtering.

\paragraph{Fixed-interval smoothing}
While the filter computes the terminal state $p(x_K \mid y_{1:K})$, the \emph{fixed-interval smoother} targets the entire trajectory $p(x_{0:K} \mid y_{1:K})$.
For linear Gaussian state-space models, fixed-interval smoothing is implemented by the (fixed-interval) Rauch--Tung--Striebel smoother \citep{rauch1965maximum}:
Relying on independent noise in \Cref{equation-state-space-model-noise}, the Rauch--Tung--Striebel smoother factorises the conditional distribution backwards in time according to (interpret $y_{1:0} = \emptyset$ for notational brevity), 
\begin{align}
\label{equation-smoothing-backward-factorisation}
p(x_{0:K} \mid y_{1:K}) = p(x_K \mid y_{1:K}) \prod_{k=1}^{K}p(x_{k-1} \mid x_{k}, y_{1:k-1}).
\end{align}
The first term, $p(x_K \mid y_{1:K})$, is the filtering distribution at the final state and usually computed with a Kalman filter.
The remaining terms, $p(x_{k-1} \mid x_{k}, y_{1:k-1})$, can be assembled with the prediction step in the filtering pass (\Cref{equation-filtering-pass-prediction}), which preserves the Kalman filter's $\mathcal{O}(K D^3)$ runtime complexity but increases the memory consumption from $\mathcal{O}(D^2)$ to $\mathcal{O}(K D^2)$.
Like the Kalman filter, the Rauch--Tung--Striebel smoother allows covariance- and Cholesky-based parametrisations \citep{park1995square} in roughly the same complexity. 
In practice, Cholesky-based arithmetic costs slightly more than covariance-based arithmetic but enjoys an increase in numerical robustness.
\Cref{appendix-section-rts-smoother} contrasts both implementations.

\paragraph{Towards fixed-point smoothing}
Finally, we turn to the \emph{fixed-point smoothing} problem: computing the conditional distribution of the initial state $p(x_0 \mid y_{1:K})$ conditioned on all observations (\Cref{figure-fixed-point-smoothing-graph}).
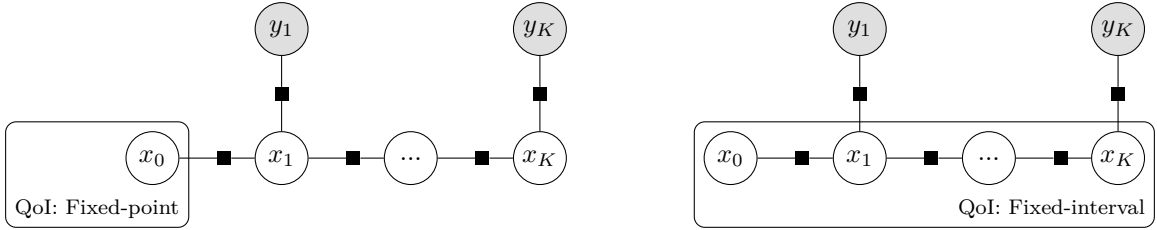
\begin{figure}[t]
\begin{center}
\begin{tabular}{ l c c c r }
\begin{tikzpicture}
\node[latent] (x1) {$x_1$};
\node[latent, left=1.0cm of x1] (x0) {$x_0$};
\node[latent, right=1.0cm of x1] (dots1) {...};
\node[latent, right=1.0cm of dots1] (xK) {$x_K$};

\node[obs, above=of x1] (y1) {$y_1$};
\node[obs, above=of xK] (yK) {$y_K$};

\factor[above=of x1]     {f1}     {} {} {} ; %
\factor[above=of xK]     {fK}     {} {} {} ; %

\factor[right=0.5cm of x0]     {f01}     {} {} {} ; %
\factor[right=0.5cm of x1]     {f1k}     {} {} {} ; %
\factor[right=0.5cm of dots1]     {f1K}     {} {} {} ; %

\factoredge[-] {x1} {f1} {y1};
\factoredge[-] {xK} {fK} {yK};

\factoredge[-] {x0} {f01} {x1};
\factoredge[-] {x1} {f1k} {dots1};
\factoredge[-] {dots1} {f1K} {xK};

\plate {target} {(x0)} {QoI: Fixed-point};
\end{tikzpicture}
&
&
&
&
\begin{tikzpicture}
\node[latent] (x1) {$x_1$};
\node[latent, left=1.0cm of x1] (x0) {$x_0$};
\node[latent, right=1.0cm of x1] (dots1) {...};
\node[latent, right=1.0cm of dots1] (xK) {$x_K$};

\node[obs, above=of x1] (y1) {$y_1$};
\node[obs, above=of xK] (yK) {$y_K$};

\factor[above=of x1]     {f1}     {} {} {} ; %
\factor[above=of xK]     {fK}     {} {} {} ; %

\factor[right=0.5cm of x0]     {f01}     {} {} {} ; %
\factor[right=0.5cm of x1]     {f1k}     {} {} {} ; %
\factor[right=0.5cm of dots1]     {f1K}     {} {} {} ; %

\factoredge[-] {x1} {f1} {y1};
\factoredge[-] {xK} {fK} {yK};

\factoredge[-] {x0} {f01} {x1};
\factoredge[-] {x1} {f1k} {dots1};
\factoredge[-] {dots1} {f1K} {xK};

\plate {target} {(x0) (xK)} {QoI: Fixed-interval};
\end{tikzpicture}
\end{tabular}
\end{center}
\caption{Fixed-point (left) versus fixed-interval smoothing problem (right) as factor graphs. The shaded variables are observed. ``QoI'': ``Quantity of interest''.}
\label{figure-fixed-point-smoothing-graph}
\end{figure}
The central difficulty for fixed-point smoothing is that we ask for $\mathcal{O}(K)$ algorithms even though all observations $y_{1:K}$ are in the ``future'' of $x_0$, which lacks an immediate sequential formulation.
That said, one algorithm for sequentially solving the fixed-point smoothing problem involves Rauch--Tung--Striebel smoothing:
The fixed-point smoothing problem could be solved by assembling $p(x_{0:K} \mid y_{1:K})$ with the Rauch--Tung--Striebel smoother, which yields a parametrisation of $p(x_0 \mid y_{1:K})$ in closed form.
Unfortunately, this solution is not very efficient: 
the Rauch--Tung--Striebel smoother stores parametrisations of all conditional distributions $\{p(x_{k-1} \mid x_k, y_{1:k-1})\}_{k=1}^K$, which is problematic for long time-series because it consumes $\mathcal{O}(K D^2)$ memory. 
In contrast, state-of-the-art {fixed-point smoothers} require only $\mathcal{O}(D^2)$ memory.

\subsection{Existing approaches to fixed-point smoothing}
\label{section-existing-approaches-to-fixed-point-smoothing}

The state of the art in fixed-point smoothing does not use fixed-interval smoothers to compute $p(x_0 \mid x_{1:K})$.
There are two more efficient approaches:
Either we solve the fixed-point smoothing problem by computing the solution to a specific high-dimensional filtering problem \citep[{``fixed-point smoothing via state-augmented filtering'' below}]{biswas1972approach}, or we derive recursions for how $p(x_0 \mid y_{1:k})$ evolves with $k=1, ..., K$ \citep[``fixed-point recursions'' below]{meditch1967orthogonal,meditch1967optimal,meditch1969stochastic}.
Both solutions have advantages and disadvantages. We discuss both before explaining how our algorithm fixes their shortcomings.

\paragraph{Fixed-point smoothing via state-augmented filtering}
Fixed-point smoothing can be implemented with a Kalman filter, more precisely, by running a filter on the state-space model \citep{biswas1972approach}
\begin{align}
\label{equation-state-space-model-augmented}
\begin{pmatrix}
x_{k} \\
x_0
\end{pmatrix}
= 
\begin{pmatrix}
A_k & 0 \\
0 & I_D
\end{pmatrix}
\begin{pmatrix}
x_{k-1} \\
x_0
\end{pmatrix}
+
\begin{pmatrix}
I_D \\
0
\end{pmatrix}
b_k,
\quad
y_{k}
= 
\begin{pmatrix}
H_k & 0
\end{pmatrix}
\begin{pmatrix}
x_{k} \\
x_0
\end{pmatrix}
+
r_k,
\quad
k=1, ..., K
\end{align}
with initial condition
\begin{align}
\label{equation-state-space-model-augmented-initial-condition}
\begin{pmatrix}
x_0 \\
x_0 
\end{pmatrix}
\sim
N\left(
\begin{pmatrix}
m_{0 \mid 0} \\
m_{0 \mid 0} 
\end{pmatrix},
\begin{pmatrix}
C_{0 \mid 0} & C_{0 \mid 0} \\
C_{0 \mid 0} & C_{0 \mid 0}
\end{pmatrix}
\right)
=
N\left(
\begin{pmatrix}
m_{0 \mid 0} \\
m_{0 \mid 0} 
\end{pmatrix},
\begin{pmatrix}
\Chol{C_{0 \mid 0}} & 0 \\
\Chol{C_{0 \mid 0}} & 0
\end{pmatrix}
\begin{pmatrix}
\Chol{C_{0 \mid 0}} & 0 \\
\Chol{C_{0 \mid 0}} & 0
\end{pmatrix}^\top
\right)
\end{align}
The covariance of the initial condition is rank-deficient. However, the generalised Cholesky factor remains well-defined according to \Cref{equation-state-space-model-augmented-initial-condition}. 
The difference between the state-space models in \Cref{equation-state-space-model-augmented} and \Cref{equation-state-space-model} is that \Cref{equation-state-space-model-augmented} tracks the augmented state $(x_k, x_0)$ instead of $x_k$.
As a result, the Kalman filter applied to \Cref{equation-state-space-model-augmented,equation-state-space-model-augmented-initial-condition} yields parametrisations of the conditional distributions $
\big\{p(x_k, x_0 \mid y_{1:k})\big\}_{k=1}^K$
in $\mathcal{O}(K (2D)^3)$ runtime and $\mathcal{O}((2D)^2)$ memory.
The factor ``$2D$'' stems from doubling the state-space dimension.
The initial condition $p(x_0 \mid y_{1:K})$ emerges from the augmented state $p(x_K, x_0 \mid y_{1:K})$ in closed form.
Relying on the Kalman filter formulation inherits all computational benefits of Kalman filters, most importantly, the Kalman filter's Cholesky-based form with its attractive numerical robustness.
Nonetheless, doubling of the state-space dimension is unfortunate because it increases the runtime roughly by factor $8$ (because $(2D)^3 = 8 D^3$) and the memory by factor $4$ (because $(2D)^2 = 4 D^2$).
However, as long as one commits to covariance-based arithmetic, this increase can be avoided:

\paragraph{Fixed-point recursions}
\label{section-fixed-point smoother-reduced}
Suppose we temporarily ignore Cholesky-based forms and commit to covariance-based parametrisations of Gaussian distributions. 
In that case, we can improve the efficiency of the fixed-point smoother.
It turns out that during the forward pass, the mean and covariance of $p(x_0 \mid y_{1:k})$ follow a recursion that can be run parallel to the Kalman filter pass \citep{meditch1969stochastic,sarkka2010gaussian,meditch1967orthogonal}.
\citet{sarkka2010gaussian} explain how to implement this recursion:
start by initialising $m_{0 \mid 0}$ and $C_{0 \mid 0}$ according to \Cref{equation-state-space-model} and set $G_{0 \mid 0} = I$.
Then, iterate from $(m_{0\mid k-1},C_{0\mid k-1},G_{0\mid k-1})$ to $(m_{0\mid k},C_{0\mid k},G_{0\mid k})$ using the recursion \citep{sarkka2010gaussian,sarkka2013bayesian:firstedition}
\begin{subequations}
\label{equation-fixed-point-smoother-recursion}
\begin{align}
G_{0 \mid k} &= G_{0 \mid k-1} G_{k-1 \mid k} \\
m_{0 \mid k} &= m_{0 \mid k-1} + G_{0 \mid k}(m_{k \mid k} - m_{k \mid k-1}) \\
C_{0 \mid k} &= C_{0 \mid k-1} + G_{0 \mid k}(C_{k \mid k} - C_{k \mid k-1}) (G_{0 \mid k})^\top.
\end{align}
\end{subequations}
Next to $(m_{0\mid k-1}, C_{0\mid k-1}, G_{0\mid k-1})$, these formulas only depend on the output of the prediction step as well as the smoothing gains $G_{k-1 \mid k}$ (smoothing gains: \Cref{appendix-section-rts-smoother}).
For $k=1$, the fixed-point gain intentionally equals the smoothing gain, which \Cref{equation-fixed-point-smoother-recursion} expresses by initialising $G_{0 \mid 0}=I$.
The recursion in \Cref{equation-fixed-point-smoother-recursion} can be implemented to run simultaneously with the forward filtering pass.
Unfortunately, even though this implementation avoids doubling the size of the state-space model and enjoys $\mathcal{O}(D^2)$ memory and $\mathcal{O}(K D^3)$ runtime, a Cholesky-based formulation has been unknown (until now).
Thus, \Cref{equation-fixed-point-smoother-recursion} cannot be applied to problems where numerical robustness is critical, like the probabilistic numerical simulation of differential equations \citep{kramer2024implementing}.
The main contribution of this paper is to generalise \Cref{equation-fixed-point-smoother-recursion} to enable Cholesky-based parametrisations:

\begin{contribution}
Compute the solution of the fixed-point smoothing problem $p(x_0 \mid y_{1:K})$ in $\mathcal{O}(K D^3)$ runtime, $\mathcal{O}(D^2)$ memory, using any parametrisation of Gaussian variables, and without state augmentation; \Cref{table-existing-approaches-to-fixed-point-smoothing}.
\end{contribution}
\begin{table}[t]
\caption{Existing approaches to the fixed-point smoothing problem. The ``fixed-point recursion'' represents \citet{meditch1969stochastic,meditch1967orthogonal} through  \Cref{equation-fixed-point-smoother-recursion}.
``$\mathcal{O}(1)$ memory'' stands in contrast to the $\mathcal{O}(K)$ memory of a fixed-interval smoother, and the ``low-dimensional state'' refers to avoiding state-augmentation, which affects both runtime and memory because it doubles the dimension of the state-space model.
}
\label{table-existing-approaches-to-fixed-point-smoothing}
\begin{center}
\begin{tabular}{ l c c c }
\toprule
Method & $\mathcal{O}(1)$ memory & Low-dimensional state & Cholesky-based
\\
\midrule
Via Rauch--Tung--Striebel smoothing& \xmark & \cmark & \cmark 
\\
Via filtering with augmented state & \cmark & \xmark & \cmark 
\\
Fixed-point recursion (\Cref{equation-fixed-point-smoother-recursion}) & \cmark & \cmark & \xmark 
\\
\midrule
This work (\Cref{algorithm-fixed-point-smoother,algorithm-covariance-based-implementation,algorithm-cholesky-based-implementation}) & \cmark & \cmark & \cmark
\\
\bottomrule
\end{tabular}
\end{center}
\end{table}

\section{The method: Numerically robust fixed-point smoothing}
\label{section-method}

Our approach to numerically robust fixed-point smoothing involves two steps:
First, we derive a recursion for the conditional distribution $p(x_0 \mid x_k, y_{1:k})$ instead of one for the joint distribution $p(x_0, x_k \mid y_{1:k})$, $k=1, ..., K$.
Second, we implement the recursion in Cholesky-based parametrisations without losing closed-form, constant-memory updates.
As a byproduct of this derivation, other parametrisations of Gaussian distributions (like the information form) become possible, too.
However, we focus on Cholesky-based implementations for their numerical robustness and leave other choices to future work.

\subsection{A new recursion for fixed-point smoothing}
A derivation of a new fixed-point smoothing recursion follows.
Similar to the state-augmented filtering perspective, the target distribution $p(x_0 \mid y_{1:K})$ can be written as the marginal of the augmented state
\begin{align}
\label{equation-final-marginalisation-fixedpoint}
p(x_0 \mid y_{1:K}) = \int p(x_0, x_K \mid y_{1:K}) \diff x_K.
\end{align}
Computing $p(x_0 \mid y_{1:K})$ becomes a byproduct of computing $p(x_0, x_K \mid y_{1:K})$, and the latter admits a sequential formulation.
This perspective was essential to implementing fixed-point smoothing with state-augmented filtering.
The augmented state factorises into the conditional
\begin{align}
p(x_0, x_K \mid y_{1:K})
= 
p(x_0 \mid x_K, y_{1:K}) p(x_K \mid y_{1:K}).
\end{align}
Since $p(x_K \mid y_{1:K})$ is already computed in the forward pass, it suffices to derive a recursion for the fixed-point conditional $p(x_0 \mid x_K, y_{1:K})$.
The backward factorisation of the smoothing distribution in \Cref{equation-smoothing-backward-factorisation} implies
\begin{align}
\label{equation-many-variable-integral}
p( x_0 \mid x_K, y_{1:K} ) 
=
\int 
\prod_{k=1}^K
	p( x_{k-1} \mid x_{k}, y_{1:k-1}) \diff x_{1:{K-1}}.
\end{align} 
Marginalising over all intermediate states $x_{1:K-1}$ like in \Cref{equation-many-variable-integral} turns a Rauch--Tung--Striebel smoother into a fixed-point smoother.
An $\mathcal{O}(K)$ runtime implementation now emerges from observing how the integral in \Cref{equation-many-variable-integral} rearranges to a nested {sequence} of single-variable integrals,
\begin{subequations}
\begin{align}
p(x_0 \mid x_K, y_{1:K})
&=
\int 
\prod_{k=1}^{K}
	p(x_{k-1} \mid x_{k}, y_{1:k-1}) \diff x_{1:{K-1}}
\\
&=
\int ... \left(\int\left[\int 
	p(x_0 \mid x_{1})
	p(x_1 \mid x_2, y_{1:1}) \diff x_1\right]
	p(x_2 \mid x_3, y_{1:2}) \diff x_2 \right) ... \diff x_{K-1}
\\
&=
\int ... 
\left(
\int
p(x_0 \mid x_2, y_1)
p(x_2 \mid x_3, y_{1:2})
\diff x_2
\right)
...
\diff x_{K-1}
\end{align}
\end{subequations}
This rearranging of the integrals is essential to the derivation because it implies a forward-in-time recursion:
\begin{algo}[Fixed-point smoother]
\label{algorithm-fixed-point-smoother}
To compute the solution to the fixed-point smoothing problem, assemble $p(x_K \mid y_{1:K})$ with a Kalman filter and evaluate $p(x_0 \mid x_K, y_{1:K})$ as follows. (To simplify the index-related notation in this algorithm, read $y_{1:-1} = y_{1:0} = \emptyset$.)
\begin{enumerate}
	\item 
	Initialise $p(x_0 \mid x_{0}, y_{1:-1})= \mathcal{N}(G_{0 \mid 0} x_0 + p_{0 \mid 0}, P_{0 \mid 0})$ with $G_{0 \mid 0} = I_D$, $p_{0 \mid 0} = 0$, and $P_{0 \mid 0} = 0$.
	\item For $k=1, ..., K$, iterate from $k-1$ to $k$,
	\begin{align}
	\label{equation-iteration-sqrt-fixed-point smoother}
	p(x_0 \mid x_{k}, y_{1:k-1}) 
	&= 
	\int 
	p(x_0 \mid x_{k-1}, y_{1:k-2}) 
	p(x_{k-1} \mid x_{k}, y_{1:k-1}) 
	\diff x_{k-1}.
	\end{align}
	The conditionals $p(x_{k-1} \mid x_{k}, y_{1:k-1})$ are from the Rauch--Tung--Striebel smoother (\Cref{equation-smoothing-backward-factorisation}). 
	\item Marginalise $p(x_0 \mid y_{1:K})$ from $p(x_{K} \mid y_{1:K})$ and $p(x_0 \mid x_K, y_{1:K-1})$ via \Cref{equation-final-marginalisation-fixedpoint}.
\end{enumerate}
\end{algo}
\Cref{equation-iteration-sqrt-fixed-point smoother} turns a Rauch--Tung--Striebel smoother into a fixed-point smoother:
The recursion requires access to $p(x_{k-1} \mid x_k, y_{1:k-1})$ computed by the forward-pass of the Rauch--Tung--Striebel smoother.
Therefore, \Cref{equation-iteration-sqrt-fixed-point smoother} runs concurrently to the forward filtering pass.

\subsection{Cholesky-based implementation}
The missing link for \Cref{algorithm-fixed-point-smoother} is the merging of two affinely related conditionals in \Cref{equation-iteration-sqrt-fixed-point smoother}.
Since both conditionals in \Cref{equation-iteration-sqrt-fixed-point smoother} are affine and Gaussian, their combination $p(x_0 \mid x_k, y_{1:k-1})$ is Gaussian and available in closed form for all $k=1, ..., K$.
Its parametrisation depends on the parametrisation of each input distribution.
For covariance- and Cholesky-based arithmetic, it looks as follows:
\begin{algo}[Covariance-based implementation of \Cref{equation-iteration-sqrt-fixed-point smoother}]
\label{algorithm-covariance-based-implementation}
Recall the initialisation of \Cref{algorithm-fixed-point-smoother} and the convention $y_{1:-1} =y_{1:0} = \emptyset$.
For any $k=1, ..., K$, if the fixed-point and smoothing conditionals (see \Cref{equation-smoothing-backward-factorisation} \& \Cref{appendix-section-rts-smoother} for the latter) are parametrised by
\begin{subequations}
\begin{align}
p(x_0 \mid x_{k-1}, y_{1:k-2}) 
	&= 
	\mathcal{N}(G_{0 \mid k-1} x_{k-1} + p_{0 \mid k-1}, P_{0 \mid k-1})
\\
p(x_{k-1} \mid x_k, y_{1:k-1}) 
	&= 
	\mathcal{N}(G_{k-1 \mid k} x_{k} + p_{k-1 \mid k}, P_{k-1 \mid k}),
\end{align}
\end{subequations}
for given $G_{i\mid j} \in \Rbb^{D \times D}$, $p_{i \mid j} \in \Rbb^{D}$, and $P_{i \mid j} \in \Rbb^{D \times D}$,
then, the next fixed-point conditional is
\begin{subequations}
\begin{align}
p(x_0 \mid x_k, y_{1:k-1})
=&\, \mathcal{N}( G_{0\mid k} x_k + p_{0\mid k}, P_{0 \mid k} ),
\\
G_{0 \mid k} 
\coloneqq&\, G_{0 \mid k-1} G_{k-1 \mid k}
, \\
p_{0 \mid k} \coloneqq&\,G_{0 \mid k-1} p_{k-1 \mid k} + p_{0 \mid k-1} 
, \\
P_{0 \mid k} \coloneqq&\,G_{0 \mid k-1} P_{k-1 \mid k} (G_{0 \mid k-1})^\top + P_{0 \mid k-1}.
\label{equation-cholesky-based-fixed-point-cov-in-cov}
\end{align}
\end{subequations}
Implementing this operation costs $\mathcal{O}(D^3)$ floating-point operations.
\end{algo}

\begin{algo}[Cholesky-based implementation of \Cref{equation-iteration-sqrt-fixed-point smoother}]
\label{algorithm-cholesky-based-implementation}
For any $k=1, ..., K$ (again \mbox{$y_{1:-1} = y_{1:0} = \emptyset$}), if the previous fixed-point conditional and smoothing transition (see \Cref{equation-smoothing-backward-factorisation} \& \Cref{appendix-section-rts-smoother}) are
\begin{subequations}
\begin{align}
p(x_0 \mid x_{k-1}, y_{1:k-2}) 
	&= 
	\mathcal{N}(G_{0 \mid k-1} x_{k-1} + p_{0 \mid k-1}, \Chol{P_{0 \mid k-1}} (\Chol{P_{0 \mid k-1}})^\top)
\\
p(x_{k-1} \mid x_k, y_{1:k-1}) 
	&= 
	\mathcal{N}(G_{k-1 \mid k} x_{k} + p_{k-1 \mid k}, \Chol{P_{k-1 \mid k}} (\Chol{P_{k-1 \mid k}})^\top ),
\end{align}
\end{subequations}
for given $G_{i \mid j} \in \Rbb^{D \times D}$, $p_{i \mid j} \in \Rbb^D$, and $\Chol{P_{i \mid j}} \in \Rbb^{D \times D}$,
then, the next fixed-point conditional is
\begin{subequations}
\begin{align}
p(x_0 \mid x_k, y_{1:k-1})
=&\, \mathcal{N}( G_{0\mid k} x_k + p_{0\mid k}, \Chol{P_{0 \mid k}} (\Chol{P_{0 \mid k}})^\top ),
\\
G_{0 \mid k} 
\coloneqq&\, G_{0 \mid k-1} G_{k-1 \mid k}
, \\
p_{0 \mid k} \coloneqq&\,G_{0 \mid k-1} p_{k-1 \mid k} + p_{0 \mid k-1} 
, \\
\Chol{P_{0 \mid k}} \coloneqq&\,\mathfrak{R}^\top,
\label{equation-cholesky-based-fixed-point-cov-in-cholesky}
\end{align}
\end{subequations}
where $\mathfrak{R}$ is the upper triangular matrix returned by the QR-decomposition
\begin{align}
\mathfrak{Q} \, \mathfrak{R} =
\begin{pmatrix}
(\Chol{P_{k-1 \mid k}})^\top (G_{0 \mid k-1})^\top
\\
(\Chol{P_{0 \mid k-1}})^\top
\end{pmatrix}.
\end{align}
Implementing this operation costs $\mathcal{O}(D^3)$ floating-point operations.
\end{algo}
\Cref{algorithm-covariance-based-implementation} follows from the rules for manipulating affinely related Gaussian distributions \citep[e.g.][Appendix A.1]{sarkka2013bayesian:firstedition}.
\Cref{algorithm-cholesky-based-implementation} is indeed the Cholesky-based version of \Cref{algorithm-covariance-based-implementation}:
The expression in \Cref{equation-cholesky-based-fixed-point-cov-in-cholesky} is the generalised Cholesky factor of the expression in \Cref{equation-cholesky-based-fixed-point-cov-in-cov} because
\begin{align}
\Chol{P_{k-1 \mid k}}
(\Chol{P_{k-1 \mid k}})^\top
=
\mathfrak{R}^\top \mathfrak{Q}^\top \mathfrak{Q} \mathfrak{R}
=
\begin{pmatrix}
G_{0 \mid k-1} \Chol{P_{k-1 \mid k}} 
&
\Chol{P_{0 \mid k-1}}
\end{pmatrix}
\begin{pmatrix}
(\Chol{P_{k-1 \mid k}})^\top (G_{0 \mid k-1})^\top
\\
(\Chol{P_{0 \mid k-1}})^\top
\end{pmatrix}
=
P_{k-1\mid k}
\end{align}
holds.
The combination of \Cref{algorithm-cholesky-based-implementation,algorithm-fixed-point-smoother}
is a novel, Cholesky-based implementation of
a fixed-point smoother.
Similarly, the combination of \Cref{algorithm-covariance-based-implementation,algorithm-fixed-point-smoother}
is a novel, covariance-based implementation of
a fixed-point smoother.
With a small modification to \Cref{algorithm-fixed-point-smoother}, combining \Cref{algorithm-covariance-based-implementation} with the covariance-based formulas in \Cref{algorithm-fixed-point-smoother} recovers \Cref{equation-fixed-point-smoother-recursion}:
\begin{proposition}
\label{proposition-reduction}
If the combination of \Cref{algorithm-covariance-based-implementation,algorithm-fixed-point-smoother} computes the marginals
\begin{align}
p(x_0 \mid y_{1:k})
=
\int p(x_0 \mid x_{k}, y_{1:k-1}) p(x_k \mid y_{1:k}) \diff x_k,
\end{align}
at every $k=1, ..., K$ instead of only at the final time-step, the recursion reduces to \Cref{equation-fixed-point-smoother-recursion}.
\end{proposition}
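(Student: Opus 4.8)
The plan is an induction on $k$ that runs the \emph{conditional} parametrisation $(G_{0\mid k}, p_{0\mid k}, P_{0\mid k})$ propagated by \Cref{algorithm-covariance-based-implementation} inside \Cref{algorithm-fixed-point-smoother} alongside the \emph{marginal} parametrisation $(G_{0\mid k}, m_{0\mid k}, C_{0\mid k})$ appearing in \Cref{equation-fixed-point-smoother-recursion}, with the two linked by the marginalisation step of \Cref{algorithm-fixed-point-smoother}. The key preliminary observation is that marginalising the fixed-point conditional $\mathcal{N}(G_{0\mid k} x_k + p_{0\mid k}, P_{0\mid k})$ against the filtering marginal $p(x_k \mid y_{1:k}) = \mathcal{N}(m_{k\mid k}, C_{k\mid k})$ yields
\begin{align}
m_{0\mid k} = G_{0\mid k}\, m_{k\mid k} + p_{0\mid k},
\qquad
C_{0\mid k} = G_{0\mid k}\, C_{k\mid k}\, (G_{0\mid k})^\top + P_{0\mid k}.
\end{align}
I would record this identity once and then apply it at both indices $k-1$ and $k$ in the inductive step; it is exactly the change of variables between the two parametrisations that makes the proposition meaningful.

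For the base case, the initialisation in \Cref{algorithm-fixed-point-smoother} sets $G_{0\mid 0} = I_D$, $p_{0\mid 0} = 0$, $P_{0\mid 0} = 0$; the identity above then returns $m_{0\mid 0}$ and $C_{0\mid 0}$, which matches the initialisation $G_{0\mid 0} = I$, $m_{0\mid 0}$, $C_{0\mid 0}$ of \Cref{equation-fixed-point-smoother-recursion}.

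For the inductive step, I would substitute into the updates of \Cref{algorithm-covariance-based-implementation} the explicit Rauch--Tung--Striebel smoother expressions for the smoothing offset and noise (see \Cref{appendix-section-rts-smoother}), namely $p_{k-1\mid k} = m_{k-1\mid k-1} - G_{k-1\mid k}\, m_{k\mid k-1}$ and $P_{k-1\mid k} = C_{k-1\mid k-1} - G_{k-1\mid k}\, C_{k\mid k-1}\, (G_{k-1\mid k})^\top$. Next, I would use the marginalisation identity at index $k-1$ to eliminate the offsets carried forward from the previous step, $p_{0\mid k-1} = m_{0\mid k-1} - G_{0\mid k-1}\, m_{k-1\mid k-1}$ and $P_{0\mid k-1} = C_{0\mid k-1} - G_{0\mid k-1}\, C_{k-1\mid k-1}\, (G_{0\mid k-1})^\top$, and finally apply the marginalisation identity at index $k$ to the freshly computed $(G_{0\mid k}, p_{0\mid k}, P_{0\mid k})$. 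Collapsing everything with $G_{0\mid k} = G_{0\mid k-1}\, G_{k-1\mid k}$, the $G_{0\mid k-1}\, m_{k-1\mid k-1}$ terms cancel in the mean line and the $G_{0\mid k-1}\, C_{k-1\mid k-1}\, (G_{0\mid k-1})^\top$ terms cancel in the covariance line, leaving precisely $m_{0\mid k} = m_{0\mid k-1} + G_{0\mid k}(m_{k\mid k} - m_{k\mid k-1})$ and $C_{0\mid k} = C_{0\mid k-1} + G_{0\mid k}(C_{k\mid k} - C_{k\mid k-1})(G_{0\mid k})^\top$, while the gain update $G_{0\mid k} = G_{0\mid k-1}\, G_{k-1\mid k}$ coincides by construction.

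There is no deep obstacle here; the step requiring the most care is the bookkeeping of which filtering and smoothing quantities ($m_{k\mid k}, m_{k\mid k-1}, C_{k\mid k}, C_{k\mid k-1}, G_{k-1\mid k}$) enter where, and in particular invoking the correct Rauch--Tung--Striebel identities for the offset $p_{k-1\mid k}$ and noise $P_{k-1\mid k}$ — these are exactly the terms that trigger the telescoping cancellations. One should also confirm that the ``small modification'' of \Cref{algorithm-fixed-point-smoother} does not alter the recursion it carries forward: the extra marginalisation at each step is merely a read-out, so the conditional sequence $(G_{0\mid k}, p_{0\mid k}, P_{0\mid k})$ is untouched, which is what licenses the induction.
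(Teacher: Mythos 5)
Your proposal is correct and follows essentially the same route as the paper's proof: an induction whose step substitutes the Rauch--Tung--Striebel expressions $p_{k-1\mid k} = m_{k-1\mid k-1} - G_{k-1\mid k}\,m_{k\mid k-1}$ and $P_{k-1\mid k} = C_{k-1\mid k-1} - G_{k-1\mid k}\,C_{k\mid k-1}(G_{k-1\mid k})^\top$ into the updates of \Cref{algorithm-covariance-based-implementation}, uses $G_{0\mid k} = G_{0\mid k-1}G_{k-1\mid k}$, and lets the $G_{0\mid k-1}\,m_{k-1\mid k-1}$ and $G_{0\mid k-1}\,C_{k-1\mid k-1}(G_{0\mid k-1})^\top$ terms telescope. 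The only (cosmetic) difference is bookkeeping: you carry the induction hypothesis as agreement of the marginals via the read-out identity $m_{0\mid k} = G_{0\mid k}m_{k\mid k} + p_{0\mid k}$, $C_{0\mid k} = G_{0\mid k}C_{k\mid k}(G_{0\mid k})^\top + P_{0\mid k}$ applied at both $k-1$ and $k$, whereas the paper phrases the hypothesis directly as the equivalent conditional-offset identities $p_{0\mid k} = m_{0\mid k-1} - G_{0\mid k}m_{k\mid k-1}$ and $P_{0\mid k} = C_{0\mid k-1} - G_{0\mid k}C_{k\mid k-1}(G_{0\mid k})^\top$.
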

\begin{proof}
Induction; \Cref{appendix-section-proof-of-reduction-proposition}.
\end{proof}

\paragraph{Computational complexity}
Both \Cref{algorithm-covariance-based-implementation,algorithm-cholesky-based-implementation} cost $\mathcal{O}(D^3)$ floating-point operations per step.
The $k$th iteration in 
\Cref{algorithm-fixed-point-smoother}
needs access to the same quantities as the $k$th iteration of the forward pass of a Rauch--Tung--Striebel smoother, which can be implemented in $\mathcal{O}(D^2)$ memory.
Unlike the Rauch--Tung--Striebel smoother, \Cref{algorithm-fixed-point-smoother} does not store intermediate results, which is why it consumes $\mathcal{O}(D^2)$ memory in total (instead of $\mathcal{O}(K D^2)$).
Both
\Cref{algorithm-covariance-based-implementation} and \Cref{equation-fixed-point-smoother-recursion} require three matrix-matrix- and one matrix-vector multiplication per step, so they are equally efficient (\Cref{algorithm-covariance-based-implementation} uses fewer matrix additions. However, the addition of matrices is fast).
Therefore, it will be no loss of significance that we always implement covariance-based fixed-point smoothing via \Cref{algorithm-fixed-point-smoother} instead of \Cref{equation-fixed-point-smoother-recursion} in our experiments.

\clearpage
\section{Experiments}
\label{section-experiments}

The experiments serve two purposes.
To start with, they investigate whether the proposed Cholesky-based implementation (\Cref{algorithm-cholesky-based-implementation}) of the fixed-point smoother recursion (\Cref{algorithm-fixed-point-smoother}) holds its promises about memory, runtime, and numerical robustness.
According to the theory in \Cref{section-method}, we should observe:
\begin{itemize}
	\item \emph{Memory:} Slightly lower memory demands than a state-augmented Kalman filter; drastically lower memory demands than a Rauch--Tung--Striebel smoother.
	\item \emph{Wall-time:} Faster than a state-augmented, Cholesky-based Kalman filter; roughly as fast as a Rauch--Tung--Striebel smoother.
	\item \emph{Numerical robustness:} Combining \Cref{algorithm-fixed-point-smoother} with Cholesky-based parametrisations (\Cref{algorithm-cholesky-based-implementation}) is significantly more robust than combining it with covariance-based parametrisations (\Cref{algorithm-covariance-based-implementation}); comparable to a Cholesky-based implementation of a Kalman filter.
\end{itemize}
These three phenomena will be studied in two experiments, one for runtime/memory and one for numerical robustness (outline: \Cref{figure-outline-of-the-demonstrations}).
\begin{figure}
\begin{center}
\small
\begin{tikzpicture}

\node[minimum height=0.3cm, minimum width=3.6cm] (covb) { \sffamily \textbf{ Covariance-based} };
\node[ minimum height=0.3cm, minimum width=3.6cm, below=0.1cm of covb] (known1) { {Known} };
\node[ minimum height=0.3cm, minimum width=3.6cm, below=0.1cm of known1] (known2) { {Known} };
\node[ minimum height=0.3cm, minimum width=3.6cm, below=0.1cm of known2] (known3) { {Known, but see Prop. \ref{proposition-reduction}} };

\node[ minimum height=0.3cm, minimum width=3.6cm, right=0.2cm of covb] (cholb) { \sffamily \textbf{ Cholesky-based} };
\node[ minimum height=0.3cm, minimum width=3.6cm, below=0.1cm of cholb] (known4) { {Known} };
\node[ minimum height=0.3cm, minimum width=3.6cm, below=0.1cm of known4] (known5) { {Known} };
\node[minimum height=0.3cm, minimum width=3.6cm, below=0.1cm of known5] (ours) { { {Our contribution} } };

\node[ minimum height=0.3cm, minimum width=3.6cm, left=0.1cm of known1] (filter) { \sffamily \textbf{ Via filter} };
\node[ minimum height=0.3cm, minimum width=3.6cm, left=0.1cm of known2] (fixedinterval) { \sffamily \textbf{ Via Rauch--Tung--Striebel } };
\node[ minimum height=0.3cm, minimum width=3.6cm, left=0.1cm of known3] (fixed-point) { \sffamily \textbf{ Fixed-point recursion} };

\node[minimum height=0.3cm, minimum width=4.0cm, right=0.4cm of ours] (demoI) { \parbox{4.0cm}{  Experiment II: Robustness} };
\node[minimum height=0.3cm, minimum width=3.6cm, below=0.4cm of ours] (demoII) {  { Experiment I: Efficiency} };

\node[gray, fill, opacity=0.1, fit=(known4) (demoII)] {};
\node[gray, fill, opacity=0.1, fit=(known3) (demoI)] {};

\end{tikzpicture}
\end{center}
\caption{Outline of the memory-, runtime-, and robustness-related demonstrations.}
\label{figure-outline-of-the-demonstrations}
\end{figure}
The problem set for these two experiments includes a toy problem for the former and an application in probabilistic numerics for the latter.
Afterwards, a case study in tracking shows how to use the fixed-point smoother for estimating the initial parameter in a state-space model.

\paragraph{Hardware and code}
All experiments run on the CPU of a consumer-grade laptop and finish within a few minutes. 
Our JAX implementation \citep{jax2018github} of Kalman filters, 
Rauch--Tung--Striebel smoothers, and fixed-point smoothers is at
\begin{center}
\code{}
\end{center}
We implement the existing fixed-point smoother recursions in \Cref{equation-fixed-point-smoother-recursion} by combining \Cref{algorithm-fixed-point-smoother} with covariance-based parametrisations; recall \Cref{proposition-reduction}.

\subsection{Experiment I: How efficient is the fixed-point recursion?}

\paragraph{Motivation}
\Cref{section-existing-approaches-to-fixed-point-smoothing,section-method} mention three approaches to fixed-point smoothing: a detour via Rauch--Tung--Striebel smoothing, state-augmented Kalman filtering, and our recursion in \Cref{algorithm-fixed-point-smoother}.
In theory, \Cref{algorithm-fixed-point-smoother} should be the most efficient: it does not inflate the state-space model like a state-augmented Kalman filter does and requires $\mathcal{O}(1)$ instead of $\mathcal{O}(K)$ memory, unlike the Rauch--Tung--Striebel smoother.
This first experiment demonstrates that these effects are visible in practice.

\paragraph{Problem setup}
In this first example, we only measure the execution time and memory requirement of three approaches to fixed-point smoothing.
Both memory and runtime depend only on the size of a state-space model and not on the difficulty of the estimation task.
Therefore, we consider a state-space model where all system matrices and vectors are populated with random values.

\paragraph{Implementation}
We choose $K=1,000$, vary $d$, set the size of the hidden state to $D=2d$, and use Cholesky-based arithmetic for all estimators.
We take \Cref{equation-state-space-model} and introduce a nonzero bias in all noises in \Cref{equation-state-space-model-noise}.
Then, we randomly populate all system matrices in the state-space model with independent samples from $\mathcal{N}(0, \sfrac{1}{K^2})$.
Afterwards, we sample $y_{1:K}$ from this state-space model to generate toy data.
A $K$-dependent covariance controls that the samples $y_{1:K}$ remain finite in 32-bit floating-point arithmetic. However, the precise values of the model and data do not matter for this experiment -- only their size does.
Finally, we vary $d$ and measure the runtime and memory requirements of each of the three methods.
To measure runtime, we use wall time in seconds.
We display the best of three runs instead of the average because all codes are deterministic (no data-dependence, no randomness), thus the main driver for runtime differences is background machine noise.
Selecting the best of three runs minimises this noise as much as possible.
To measure memory consumption, we count the number of floating-point values the estimators carry from step to step, multiply this by 32 (because we use 32-bit arithmetic), and translate bits into bytes.

\paragraph{Evaluation}
The runtime results are in \Cref{table-results-runtime} and the memory results in  \Cref{table-results-memory}.
\begin{table}[t]
\caption{%
Runtime in seconds (wall-time, best of three runs).
Lower is better. The column-wise lowest are bold \& shaded.
Randomly populated model.
$K=1,000$ steps.
All methods use Cholesky-based parametrisations.
}
\label{table-results-runtime}
\begin{center}
\small
\begin{tabular}{l c c c c c c}
\toprule
 & $d=2$ & $d=5$ & $d=10$ & $d=20$ & $d=50$ & $d=100$ \\
\midrule
Via Rauch--Tung--Striebel &  ${5.8 \times 10^{-3}}$ & $\winner{1.8 \times 10^{-2}} $ & $6.5 \times 10^{-2}$ & $4.5 \times 10^{-1}$ & $\winner{2.9 \times 10^{0}}$ & $\winner{1.2 \times 10^{1}}$ \\
Via filter & $\winner{ 5.0 \times 10^{-3}} $ & $2.1 \times 10^{-2}$ & $8.0 \times 10^{-2}$ & $7.1 \times 10^{-1}$ & $4.6 \times 10^{0}$ & $1.6 \times 10^{1}$ \\
\midrule
\Cref{algorithm-fixed-point-smoother} & $6.4 \times 10^{-3}$ & $\winner{1.8 \times 10^{-2}}$ & $\winner{6.4 \times 10^{-2}}$ & $\winner{4.4 \times 10^{-1}}$ & $\winner{2.9 \times 10^{0}}$ & $\winner{1.2 \times 10^{1}}$ \\
\bottomrule
\end{tabular}

\end{center}
\end{table}
\begin{table}[t]
\caption[Caption with footnote]{%
Memory in bytes (we use 32-bit arithmetic).
Lower is better. The column-wise lowest entries are bold and shaded.
Randomly populated model. $K=1,000$ steps.
All methods use Cholesky-based parametrisations.
}
\label{table-results-memory}
\begin{center}
\small
\begin{tabular}{ l c c c c c c }
\toprule
 & $d=2$ & $d=5$ & $d=10$ & $d=20$ & $d=50$ & $d=100$ \\
\midrule
Via Rauch--Tung--Striebel & $2.2 \times 10^{5}$ & $1.2 \times 10^{6}$ & $4.9 \times 10^{6}$ & $1.9 \times 10^{7}$ & $1.2 \times 10^{8}$ & $4.8 \times 10^{8}$ \\
Via filter & $2.8 \times 10^{2}$ & $1.6 \times 10^{3}$ & $6.5 \times 10^{3}$ & $2.5 \times 10^{4}$ & $1.6 \times 10^{5}$ & $6.4 \times 10^{5}$ \\
\midrule
\Cref{algorithm-fixed-point-smoother} & $\winner{2.2 \times 10^{2}}$ & $\winner{1.2 \times 10^{3}}$ & $\winner{4.9 \times 10^{3}}$ & $\winner{1.9 \times 10^{4}}$ & $\winner{1.2 \times 10^{5}}$ & $\winner{4.8 \times 10^{5}}$ \\
\bottomrule
\end{tabular}
\end{center}
\end{table}
The runtime data shows that except for $d=2$, the Rauch--Tung--Striebel smoother and \Cref{algorithm-fixed-point-smoother} are faster than the state-augmented filter.
\Cref{algorithm-fixed-point-smoother} is marginally faster than the Rauch--Tung--Striebel smoother code, even though it computes strictly more at every iteration.
However, the Rauch--Tung--Striebel smoother executes two loops, one forwards and one backwards, whereas \Cref{algorithm-fixed-point-smoother} only executes a forward loop.
This discrepancy might lead to the performance improvement from Rauch--Tung--Striebel smoothing to \Cref{algorithm-fixed-point-smoother}.
The memory data shows that the Rauch--Tung--Striebel smoothing solution and the \Cref{algorithm-fixed-point-smoother} have identical memory requirements per step.
Both consume less storage than the state-augmented filter (per step).
As expected, the Rauch--Tung--Striebel smoother requires exactly $K$-times the memory of \Cref{algorithm-fixed-point-smoother}, which would be infeasible for long time series over high-dimensional states.
In general, this experiment underlines the claimed efficiency of our new fixed-point smoother recursion in a Cholesky-based parametrisation.
The following experiment will answer the question of whether Cholesky-based arithmetic is necessary.

\subsection{Experiment II: How much more robust is the Cholesky-based code?}
\label{section-experiment-robustness-bvp}

\paragraph{Motivation}
Cholesky-based implementations replace matrix addition and matrix multiplication with QR decompositions.
They are more expensive than covariance-based implementations because the decomposition is more expensive than matrix multiplication.
However, there are numerous situations where the gains in numerical robustness are worth the increase in runtime.
One example is the probabilistic numerical simulation of differential equations, which use integrated Wiener processes \citep{schober2014probabilistic,schober2019probabilistic} with high order and small time-steps.
For these applications, Cholesky-based implementations are the only feasible approach \citep{kramer2024stable}.
This second experiment shows that \Cref{algorithm-fixed-point-smoother} unlocks fixed-point smoothing for probabilistic numerical simulation through \Cref{algorithm-cholesky-based-implementation}.

\paragraph{Problem setup}
We solve a boundary value problem based on an ordinary differential equation.
\begin{wrapfigure}[12]{r}{0.32\linewidth}
	\vskip-1.25em
	\includegraphics[width=0.97\linewidth]{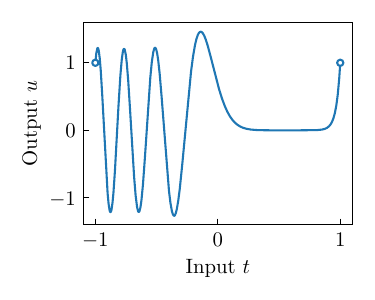}
	\vskip-1.5em
	\caption{15th Boundary value problem \citep{mazzia2015fortran}.}
	\label{figure-bvp-solution}
\end{wrapfigure}
More specifically, we solve the 15th in the collection of test problems by \citet{mazzia2015fortran} (\Cref{figure-bvp-solution}),
\begin{align}
10^{-3} \cdot \oderiv{2} u(t) = t u(t), \quad u(-1) = u(1) = 1.  
\end{align}
We follow the procedure for solving boundary value problems with a probabilistic numerical method by \citet{kramer2021linear} for the most part.
However, we skip the iterated Kalman smoother because the differential equation is linear and skip mesh refinement and expectation maximisation to keep this demonstration simple.
The focus lies on numerical robustness.
We choose a twice-integrated Wiener process prior and discretise it on $K$ equispaced points in $[-1, 1]$, which yields the latent transitions (let $\Delta t \coloneqq \sfrac{1}{K}$)
\begin{align}
A_k \coloneqq
\begin{pmatrix}
1 & \Delta t & \frac{(\Delta t)^2}{2} \\
0 & 1 & \Delta t \\
0 & 0 & 1
\end{pmatrix}
,\quad
B_k \coloneqq
\begin{pmatrix}
\frac{(\Delta t)^5}{20} & \frac{(\Delta t)^4}{8} & \frac{(\Delta t)^3}{3} \\
\frac{(\Delta t)^4}{8} & \frac{(\Delta t)^3}{3} & \frac{(\Delta t)^2}{2} \\
\frac{(\Delta t)^3}{3} & \frac{(\Delta t)^2}{2} & \Delta t \\
\end{pmatrix},
\quad
k=1, ..., K.
\end{align}
The dynamics are constant because the grid points are evenly spaced.
The means of the process noise are zero, like in \Cref{equation-state-space-model-noise}.
The hidden state 
\begin{align}
x_k \coloneqq \left(u(t_k), \oderiv{}u(t_k), \oderiv{2}u(t_k) \right), \quad k=1, ..., K
\end{align}
tracks the differential equation solution $u$ and its derivatives, including $\oderiv{2}u$, which means that the residual $10^{-3} \oderiv{2} u - t u(t)$ is a linear function of $x_k$.
We introduce the model for the constraints
($\beta_k$ shall be the nonzero mean of the observation noise $r_k$ in \Cref{equation-state-space-model-noise,equation-state-space-model})
\begin{subequations}
\begin{align}
H_k &= 
\begin{pmatrix}
-t_k & 0 & 10^{-3}
\end{pmatrix}
, &
\beta_k &= 0
, &
R_k &= 0
, &
k=1, ..., K-1,
\\
H_K &= \begin{pmatrix} 1 & 0 & 0 \end{pmatrix},
&
\beta_K &= -1,
&
R_K &= 0.
\end{align}
\end{subequations}
Note how the constraint at the final time-point encodes the right-hand side boundary condition, not the differential equation.
We choose the initial mean $m_{0 \mid 0} = (1, 0, 0)$ and initial covariance $C_{0 \mid 0} = \text{diag}(0, 1, 1)$ to represent the left-hand side boundary condition.
Estimating $x_{0:K}$ from $y_{1:K} \coloneqq 0$ solves the boundary value problem (\citep{kramer2021linear}; \Cref{figure-bvp-solution} displays the mean of the fixed-interval smoothing solution).
\citet{kramer2021linear} explain how estimating the initial condition $p(x_0 \mid y_{1:K})$ is important for parameter estimation problems.
In other words, fixed-point smoothers are relevant for boundary value problem simulation.
We are the first to use them for this task.

\paragraph{Implementation}
For this demonstration, we consider the Cholesky-based implementation of the Kalman filter as the gold standard for numerical robustness because it has been used successfully in numerous similar problems \citep{kramer2024stable,bosch2021calibrated,kramer2021linear,kramer2024implementing}.
The Cholesky-based, state-augmented Kalman filter provides a reference solution of the fixed-point equations.
We run Cholesky-based (\Cref{algorithm-cholesky-based-implementation}) and covariance-based code (\Cref{algorithm-covariance-based-implementation}) for the fixed-point smoother recursions (\Cref{algorithm-fixed-point-smoother}) and measure how much the estimated initial means deviate from the Kalman-filter reference in terms of the absolute root-mean-square error.
In infinite precision, the results would be identical.
In finite precision, all deviations should be due to a loss of stability.
We vary $K$ to investigate how the step-size $\Delta t$ affects the results, with the intuition that smaller steps lead to worse conditioning in $B_k$ and that this effect makes the estimation task more difficult \citep{kramer2024stable}.

\paragraph{Evaluation}
The results are in \Cref{table-mean-deviation-numerical-robustness}.
\begin{table}
\caption{Deviation from the Cholesky-based, state-augmented Kalman filter on the boundary value problem. Lower is better and close to machine-precision desirable.
The column-wise lowest values are bold and coloured. ``$K$'': number of grid points. Double precision (64-bit arithmetic).
}
\label{table-mean-deviation-numerical-robustness}
\begin{center}
\small
\begin{tabular}{l c c c c c c c}
\toprule
*-based:  & $K=10$ & $K=20$ & $K=50$ & $K=100$ & $K=200$ & $K=500$ & $K=1,000$ \\
\midrule
Covariance & $2.9 \times 10^{-3}$ & $3.4 \times 10^{-1}$ & $1.1 \times 10^{0}$ & $1.0 \times 10^{1}$ & $2.1 \times 10^{1}$ & NaN & NaN \\
Cholesky & $\winner{2.0 \times 10^{-10}}$ & $\winner{5.0 \times 10^{-8}}$ & $\winner{4.2 \times 10^{-7}}$ & $\winner{7.9 \times 10^{-8}}$ & $\winner{1.3 \times 10^{-7}}$ & $\winner{6.1 \times 10^{-8}}$ & $\winner{3.4 \times 10^{-8}}$ \\
\bottomrule
\end{tabular}
\end{center}
\end{table}
They indicate how the Cholesky-based code is significantly more robust than the covariance-based code.
The covariance-based implementation delivers only three meaningful digits for $K=10$ points, diverges for more grid points, and breaks for $K\geq 500$.
In contrast, the Cholesky-based code delivers meaningful approximations across all grids.
This consistency underlines how much more robust our Cholesky-based code is and how it unlocks fixed-point smoothing for probabilistic numerics.

\subsection{Case study: Estimating parameters of a state-space model}

\paragraph{Movitation}
The previous two experiments have emphasized the practicality of our proposed method. We conclude by applying fixed-point smoothing to parameter estimation in a tracking model.
This study aims to emulate applications of fixed-point smoothing in navigation tasks \citep{meditch1969stochastic} in a setup that is easy to reproduce. However, the demonstration also links to the previous boundary value problem experiment through expectation maximisation \citep{kramer2021linear}.

\paragraph{Problem setup}
The task is to estimate the mean parameter of an unknown initial condition in a car tracking example.
Define the Wiener velocity model on $K=10$ equispaced points ($\Delta t = \sfrac{1}{10})$, 
\begin{align}
A_k 
\coloneqq
\begin{pmatrix}
I_2 & \Delta t \cdot I_2 \\
0 & I_2
\end{pmatrix}
\quad
B_k  \coloneqq
\begin{pmatrix}
\frac{(\Delta t)^3}{3}\cdot I_2 & \frac{(\Delta t)^2}{2}\cdot I_2 \\
\frac{(\Delta t)^2}{2} \cdot I_2& \Delta t\cdot I_2
\end{pmatrix}
,\quad
H_k  = 
\begin{pmatrix}
I_2 & 0
\end{pmatrix}
,\quad
R_k  = 0.1^2 \cdot I_2.
\end{align}
All biases are zero, $b_k = 0$, $r_k = 0$.
Textbooks on Bayesian filtering and smoothing \citep{sarkka2013bayesian:firstedition,sarkka2023bayesian} use this Wiener velocity model to estimate the trajectory of a car. 
We populate $m$ and $\Chol{C}$ with samples from a standard normal distribution and sample an initial condition 
\begin{align}
x_0 = 
\begin{pmatrix} \theta_1 & \theta_2 & \dot \theta_1 & \dot \theta_2 \end{pmatrix} 
\sim \mathcal{N}(m_{0 \mid 0}, \Chol{C_{0 \mid 0}} (\Chol{C_{0 \mid 0}})^\top).
\end{align}
We sample artificial observations $y_{1:K}$.
From here on, the initial mean $m_{0 \mid 0}$ will be treated as unknown.

\paragraph{Implementation}
We combine fixed-point smoothing with expectation maximisation \citep{dempster1977maximum} to calibrate $m_{0 \mid 0}$ using the data $y_{1:K}$.
The expectation maximisation update for the initial mean in a linear Gaussian state-space model is $m_\text{new} \coloneqq m_{0 \mid K}
$ \citep{sarkka2023bayesian}, and this update repeats until convergence.
Here, $m_{0 \mid K}$ is the mean of $p(x_0 \mid y_{1:K})$.
We implement the fixed-point smoother recursion in Cholesky-based arithmetic and run expectation maximisation for three iterations. We initialise the mean guess by sampling all entries independently from a centred normal distribution with a variance of 100.
We track the data's marginal likelihood (``evidence''), computing it online during the forward filtering pass.

\paragraph{Evaluation}
The results are in \Cref{figure-results-for-parameter-estimation}.
\begin{figure}[t]
\begin{center}
\includegraphics[width=\linewidth]{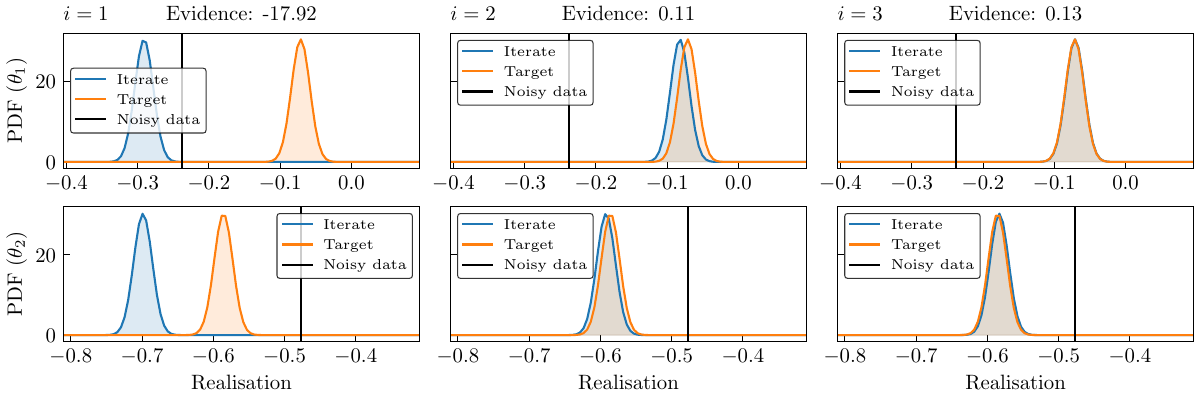}
\end{center}
\caption{%
Initial distributions $p(\theta_1, \theta_2 \mid y_{1:K})$ of the car tracking model after running the fixed-point smoother (recall $x\coloneqq(\theta_1,  \theta_2, \dot \theta_1, \dot \theta_2)$).
Left to right:
After three iterations, the combination of expectation maximisation with the fixed-point smoother finds the correct initial mean $\theta = (\theta_1, \theta_2)$ of the state-space model. 
Top: First coordinate $p(\theta_1 \mid y_{1:K})$. Bottom: second coordinate $p(\theta_2 \mid y_{1:K})$.
``PDF'': ``Probability density function''.}
\label{figure-results-for-parameter-estimation}
\end{figure}
They show how the combination of expectation maximisation with fixed-point smoothing recovers the initial mean already after three iterations.
The evidence increases at every iteration, but that is normal for expectation maximisation \citep{wu1983convergence}.
In conclusion, fixed-point smoothing is a viable parameter estimation technique in a state-space model.

\section{Discussion}
\label{section-discussion}

\paragraph{Limitations and future work}
Our new approach to fixed-point smoothing strictly generalises existing techniques, but inherits some of their limitations:
Even though the new recursion in \Cref{algorithm-fixed-point-smoother} is independent of the type of state-space model, implementing the fixed-point smoother in closed form assumes a linear Gaussian setup.
Future work should explore robust fixed-point smoothing in nonlinear state-space models, for example, through posterior linearisation \citep{garcia2016iterated}.
Like all Gaussian smoothing algorithms, our methods have cubic complexity in the state-space dimension because of the matrix-matrix arithmetic or QR decompositions, respectively.
Finally, other parametrisations of Gaussian variables could be used with \Cref{algorithm-fixed-point-smoother} instead of covariance or Cholesky-based parametrisations (\Cref{algorithm-covariance-based-implementation,algorithm-cholesky-based-implementation}); for example, the information form or ensembles \citep{murphy2022probabilistic,houtekamer2005ensemble}. Combining these alternatives with numerically robust fixed-point smoothing could be interesting avenues for future work.

\paragraph{Conclusion}
This paper presented a new recursion for fixed-point smoothing in Gaussian state-space models: \Cref{algorithm-fixed-point-smoother}.
It has lower memory consumption than existing approaches because it avoids state-augmentation and foregoes storing all intermediate results of a Rauch--Tung--Striebel smoother.
It also allows arbitrary parametrisations of Gaussian distributions, and we use this perspective to derive a Cholesky-based form of fixed-point smoothers.
As a result, our method matches the speed of the fastest and the robustness of the most robust methods, as has been demonstrated in three simulations of varying difficulty.
Through these successes, our contribution hopefully revives fixed-point smoothing as part of the literature on Gaussian filters and smoothers.
We anticipate notable performance improvements for algorithms at the interface of smoothing and dynamical systems due to the newfound ability to reliably use non-standard smoothing algorithms.

\acks{}

\bibliography{main.bib}

\appendix

\section{Kalman filter recursions}
\label{appendix-section-kalman-filter}

The {Kalman filter} \citep{kalman1960new} computes $p(x_K \mid y_{1:K})$ by initialising $p(x_0 \mid y_{1:0}) = p(x_0)$ and alternating
\begin{subequations}
\begin{align}
\text{Prediction:}&
&
p(x_{k-1} \mid y_{1:k-1}) 
&\longmapsto p(x_{k} \mid y_{1:k-1})
&
k&=1,...,K
\\
\text{Update:}&
&
p(x_{k} \mid y_{1:k-1}) 
&\longmapsto p(x_{k} \mid y_{1:k})
&
k&=1,...,K.
\end{align}
\end{subequations}
Since all operations are linear and all distributions are Gaussian, the recursions are available in closed form.
Their parametrisation depends on the parametrisation of Gaussian variables as follows.

\subsection{Covariance-based parametrisation}
\label{appendix-section-covariance-based-kalman-filter}

Let $k=1, ..., K$.
The prediction step computes the predicted distribution $p(x_k \mid y_{1:k-1}) = \mathcal{N}(m_{k \mid k-1}, C_{k \mid k-1})$ from the previous filtering distribution $p(x_{k-1} \mid y_{1:k-1})=\mathcal{N}(m_{k-1 \mid k-1}, C_{k-1 \mid k-1})$ by
\begin{align}
m_{k \mid k-1} = A_k m_{k-1 \mid k-1},
\quad
C_{k \mid k-1} = A_k C_{k-1 \mid k-1} (A_k)^\top + B_k.
\end{align}
The update step takes the predicted distribution, forms the joint distribution 
\begin{align}
p(x_k, y_k \mid y_{1:k-1}) 
= p(y_k \mid x_k) p(x_k \mid y_{1:k-1}) 
= \mathcal{N}(H_k x_k, R_k) \mathcal{N}(m_{k \mid k-1}, C_{k \mid k-1})
\end{align}
and computes the update as $p(x_k \mid y_{1:k}) = \mathcal{N}(m_{k \mid k}, C_{k \mid k})$,
\begin{subequations}
\begin{align}
s_{k \mid k-1} &= H_k m_{k \mid k-1},
\\
S_{k \mid k-1} &= H_k C_{k \mid k-1} (H_k)^\top + R_k,
\\
Z_k &= C_{k \mid k-1} (H_k)^\top (S_{k \mid k-1})^{-1},
\\
m_{k \mid k} &= m_{k \mid k-1} + Z_k(y_k - s_{k \mid k-1}),
\\
C_{k \mid k} &= C_{k \mid k-1} - Z_k S_{k \mid k-1} (Z_k)^\top.
\end{align}
\end{subequations}
Iterating these two steps from $k=1, ..., K$ yields $p(x_K \mid y_{1:K})$.

\subsection{Cholesky-based parametrisation}
\label{appendix-section-Cholesky-based-kalman-filter}

The Cholesky-based parametrisation predicts the mean like the covariance-based parametrisation.
The covariance prediction is replaced by the QR decomposition 
\begin{align}
\mathfrak{Q} \mathfrak{R} 
=
\begin{pmatrix}
(\Chol{C_{k-1\mid k-1}})^\top (A_k)^\top
\\
(\Chol{B_k})^\top 
\end{pmatrix}
\end{align}
followed by setting $\Chol{C_{k \mid k-1}} = \mathfrak{R}^\top$ \citep{kramer2024stable,grewal2014kalman}. 
The logic mirrors that in \Cref{algorithm-cholesky-based-implementation}.
The update step in Cholesky-based arithmetic amounts to a QR-decomposition of \citep{gibson2005robust}
\begin{align}
\mathfrak{Q} 
\begin{pmatrix}
\mathfrak{R}_1
&
\mathfrak{R}_2
\\
0
&
\mathfrak{R}_3
\end{pmatrix}
=
\begin{pmatrix}
(\Chol{R_k})^\top & 0 \\
(\Chol{C_{k \mid k-1}})^\top (H_k)^\top
&
(\Chol{C_{k \mid k-1}})^\top
\end{pmatrix}
\end{align}
followed by setting
\begin{align}
\Chol{S_{k \mid k-1}} = (\mathfrak{R}_1)^\top,
\quad
Z_k = ((\mathfrak{R}_1)^{-1} \mathfrak{R}_{2})^\top,
\quad
\Chol{C_{k \mid k}} =(\mathfrak{R}_3)^\top.
\end{align}
Then, use $Z_k$ to evaluate $m_{k \mid k}$ and iterate $k \mapsto k+1$.
If $p(y_k \mid y_{1:k-1})$ is needed, evalaute $s_{k \mid k-1}$ like in the covariance-based implementation.
Choosing the QR decomposition by \citet{gibson2005robust} avoids implementing Gaussian updates via Cholesky downdates \citep[e.g.][]{yaghoobi2022parallel}, which are sometimes numerically unstable \citep{seeger2004low}.
We refer to \citet[Chapter 4]{kramer2024implementing} for why the above assignments yield the correct distributions.

\section{Rauch--Tung--Striebel smoother recursions}
\label{appendix-section-rts-smoother}
The Rauch--Tung--Striebel smoother proceeds similarly to the Kalman filter.

\subsection{Covariance-based parametrisation}
The prediction step in the smoother involves computing $p(x_k \mid y_{1:k-1})$ like in the filter.
It further evaluates
\begin{subequations}
\begin{align}
p(x_{k-1} \mid x_{k}, y_{1:k-1})
&=
\mathcal{N}(G_{k-1 \mid k} x_k + p_{k-1 \mid k}, P_{k-1 \mid k}),
\\
G_{k-1 \mid k} 
&=
C_{k-1 \mid k-1} (A_k)^\top (C_{k \mid k-1})^{-1}
\\
p_{k-1 \mid k}
&=
m_{k-1 \mid k-1} - G_{k-1 \mid k} m_{k \mid k-1}
\\
P_{k-1 \mid k}
&=
P_{k-1 \mid k-1} - G_{k-1 \mid k} C_{k+1 \mid k} (G_{k-1 \mid k})^\top.
\end{align}
\end{subequations}
This conditional distribution is stored after every step. The rest of the step proceeds like in the Kalman filter.

\subsection{Cholesky-based parametrisation}
\label{appendix-section-Cholesky-based-rts-smoother}

The Cholesky-based parametrisation computes the same conditional distribution as the covariance-based parametrisation but replaces matrix multiplication and matrix addition with another QR decomposition;
specifically, the QR decomposition
\citep{gibson2005robust}
\begin{align}
\mathfrak{Q} 
\begin{pmatrix}
\mathfrak{R}_1
&
\mathfrak{R}_2
\\
0
&
\mathfrak{R}_3
\end{pmatrix}
=
\begin{pmatrix}
(\Chol{B_k})^\top & 0 \\
(\Chol{C_{k-1 \mid k-1}})^\top (A_k)^\top
&
(\Chol{C_{k-1 \mid k-1}})^\top
\end{pmatrix}.
\end{align}
This QR decomposition is followed by setting
\begin{align}
\Chol{C_{k \mid k-1}} = (\mathfrak{R}_1)^\top,
\quad
G_{k-1 \mid k} = ((\mathfrak{R}_1)^{-1} \mathfrak{R}_{2})^\top,
\quad
\Chol{P_{k-1 \mid k}} =(\mathfrak{R}_3)^\top.
\end{align}
This step computes the Cholesky factors of $C_{k \mid k-1}$ and  $P_{k-1 \mid k}$, as well as the smoothing gain $G_{k - 1\mid k}$ in a single sweep. It replaces the prediction step of the Cholesky-based Kalman filter.
Compute $m_{k \mid k-1}$ and $p_{k-1 \mid k}$ like in the covariance-based parametrisation.
Store these quantities and proceed with the update step of the Cholesky-based Kalman filter.
Again, refer to \citet[Chapter 4]{kramer2024implementing} for why these assignments yield the correct distributions.

\section{Proof of \Cref{proposition-reduction}}
\label{appendix-section-proof-of-reduction-proposition}

Recall the notation from \Cref{algorithm-covariance-based-implementation}, most importantly, $G_{0 \mid 0}=I$, $p_{0 \mid 0} = 0$, and $P_{0 \mid 0} = 0$.
\Cref{algorithm-fixed-point-smoother} computes the transition 
\begin{align}
p(x_0 \mid x_k, y_{1:k-1})
=
\mathcal{N}( G_{0\mid k} x_k + p_{0\mid k}, P_{0 \mid k} ).
\end{align}
A forward pass with a Kalman filter gives
\begin{align}
p(x_k \mid y_{1:k}) = \mathcal{N}(m_{k \mid k}, C_{k \mid k}).
\end{align}
Then, due to the rules of manipulating Gaussian distributions,
\begin{align}
p(x_0 \mid y_{1:k})
=
\mathcal{N}(G_{0 \mid k} m_{k \mid k} + p_{0 \mid k}, G_{0 \mid k} C_{k \mid k} (G_{0 \mid k})^\top + P_{0 \mid k})
\end{align}
follows.
To show that this matches the result of \Cref{equation-fixed-point-smoother-recursion}, it suffices to show
\begin{subequations}
\label{equation-proof-show}
\begin{align}
p_{0 \mid k} &= m_{0 \mid k-1} - G_{0 \mid k} m_{k \mid k-1} 
\label{equation-proof-show-mean}
\\
P_{0 \mid k} &= C_{0 \mid k-1} - G_{0 \mid k} C_{k \mid k-1} (G_{0 \mid k})^\top
\label{equation-proof-show-cov}
\end{align}
\end{subequations}
because the remaining terms already coincide.
We use induction to show \Cref{equation-proof-show}.

\paragraph{Initialisation}
For $k=1$, we get
\begin{align}
p_{0 \mid 1} = G_{0 \mid 0} p_{0 \mid 1} + p_{0 \mid 0} = p_{0 \mid 1} = m_{0 \mid 0} - G_{0 \mid 1} m_{1 \mid 0},
\end{align}
which shows \Cref{equation-proof-show-mean} (compare the smoothing recursion in \Cref{appendix-section-rts-smoother} for the last step),
as well as 
\begin{align}
P_{0 \mid 1} = G_{0 \mid 0} P_{0 \mid 1} G_{0 \mid 0} + P_{0 \mid 0} = P_{0 \mid 1}
\end{align}
which gives \Cref{equation-proof-show-cov}.

\paragraph{Step}
Now, let \Cref{equation-proof-show} hold for some $k$.
Then, (abbreviate ``RTSS'': ``Rauch--Tung--Striebel smoother'')
\begin{align}
p_{0 \mid k+1} 
&= G_{0 \mid k} p_{k \mid k+1} + p_{0\mid k}
\tag{\Cref{algorithm-covariance-based-implementation}}
\\
&= G_{0 \mid k} p_{k \mid k+1} + m_{0 \mid k-1} - G_{0 \mid k} m_{k \mid k-1}
\tag{\Cref{equation-proof-show-mean} holds for $k$}
\\
&= G_{0 \mid k} ( m_{k\mid k} - G_{k \mid k+1} m_{k+1 \mid k} ) + m_{0 \mid k-1} - G_{0 \mid k} m_{k \mid k-1}
\tag{Expand $p_{k \mid k+1}$ as in RTSS}
\\
&= G_{0 \mid k} m_{k\mid k} -G_{0 \mid k} G_{k \mid k+1} m_{k+1\mid k}  + m_{0 \mid k-1} - G_{0 \mid k} m_{k \mid k-1}
\tag{Resolve parentheses}
\\
&= m_{0 \mid k-1} - G_{0 \mid k}(m_{k \mid k-1} - m_{k \mid k}) -G_{0 \mid k+1} m_{k \mid k+1}
\tag{$G_{0 \mid k} G_{k \mid k+1} = G_{0 \mid k+1}$}
\\
&= m_{0 \mid k} -G_{0 \mid k+1} m_{k+1 \mid k}
\tag{\Cref{equation-fixed-point-smoother-recursion}}
\end{align}
\Cref{equation-proof-show-mean} is complete.
Similarly,
\begin{align}
P_{0 \mid k+1}
&=
G_{0 \mid k} P_{k \mid k+1} (G_{0 \mid k})^\top + P_{0 \mid k}
\tag{\Cref{algorithm-covariance-based-implementation}}
\\
&=
G_{0 \mid k} P_{k \mid k+1} (G_{0 \mid k})^\top + C_{0 \mid k-1} - G_{0 \mid k} C_{k \mid k-1} (G_{0 \mid k})^\top
\tag{\Cref{equation-proof-show-cov} holds for $k$}
\\
&=
G_{0 \mid k} (C_{k \mid k} - G_{k \mid k+1} C_{k+1 \mid k} (G_{k \mid k+1})^\top) (G_{0 \mid k})^\top + C_{0 \mid k-1} - G_{0 \mid k} C_{k \mid k-1} (G_{0 \mid k})^\top
\tag{Expand $P_{k \mid k+1}$ as in RTSS}
\\
&=
G_{0 \mid k} C_{k \mid k} (G_{0 \mid k})^\top - G_{0 \mid k+1} C_{k+1 \mid k} (G_{0 \mid k+1})^\top + C_{0 \mid k-1} - G_{0 \mid k} C_{k \mid k-1} (G_{0 \mid k})^\top
\tag{Expand parentheses, $G_{0 \mid k} G_{k \mid k+1} = G_{0 \mid k+1}$}
\\
&=
C_{0 \mid k-1} - G_{0 \mid k} (C_{k \mid k-1} - C_{k \mid k}) (G_{0 \mid k})^\top - G_{0 \mid k+1} C_{k+1 \mid k} (G_{0 \mid k+1})^\top
\tag{Rearrange}
\\
&= 
C_{0 \mid k} - G_{0 \mid k+1} C_{k+1 \mid k} (G_{0 \mid k+1})^\top
\tag{\Cref{equation-fixed-point-smoother-recursion}}
\end{align}
the covariance recursion must hold.
\Cref{equation-proof-show} holds for all $k \geq 1$, and the statement is complete.

\end{document}